\documentclass[12pt]{amsart}

\makeatletter
\newcommand\mathcircled[1]{%
	\mathpalette\@mathcircled{#1}%
}
\newcommand\@mathcircled[2]{%
	\tikz[baseline=(math.base)] \node[draw,ellipse,inner sep=1pt] (math) {$\m@th#1#2$};%
}

\usepackage{multicol}
\usepackage{latexsym}
\usepackage{psfrag}
\usepackage{amsmath}
\usepackage{amssymb}
\usepackage{epsfig}
\usepackage{amsfonts}
\usepackage{amscd}
\usepackage{mathrsfs}
\usepackage{graphicx}
\usepackage{enumerate}
\usepackage[autostyle=false, style=english]{csquotes}
\MakeOuterQuote{"}
\usepackage{ragged2e}
\usepackage[all]{xy}
\usepackage{mathtools}
\usepackage{comment}

\theoremstyle{remark}
\newtheorem{remark}{Remark}

\newlength\ubwidth

\usepackage[dvipsnames]{xcolor}

\usepackage{placeins}
\usepackage{tikz}
\usetikzlibrary{shapes, positioning}
\usetikzlibrary{matrix,shapes.geometric,calc,backgrounds}

\newtheorem{theorem}{Theorem}

\newtheorem{ex}[theorem]{Example}

\newtheorem{cor}[theorem]{Corollary}

\usepackage{amsmath,amssymb,amsthm}
\usepackage[dvipsnames]{xcolor}
\usepackage[colorlinks=true,citecolor=RoyalBlue,linkcolor=red,breaklinks=true]{hyperref}

\usepackage{graphicx}
\usepackage{mathdots} 
\usepackage[margin=2.5cm]{geometry}
\usepackage{ytableau}

\usepackage{cancel}
\usepackage{blkarray}
\usepackage{booktabs}
\usepackage{microtype}

\begin{document}

	\title[]
	{On Partitions with Palindromic Gap Sequences}

	\author[\"{O}mr\"{u}uzun Seyrek]{Hal\.{ı}me \"{O}mr\"{u}uzun Seyrek}
	\address{Hal\.{ı}me \"{O}mr\"{u}uzun Seyrek, Faculty of Applied Sciences, 
		Biruni University, Zeytinburnu, Istanbul, 34015, Turkey}
	\email{hseyrek@biruni.edu.tr}
	
	\subjclass[2010]{05A17, 05A15, 11P84}
	
\keywords{palindromic-gap partition, partition generating function, $q$-series}
	
	\date{2026}
	
\begin{abstract}
We introduce and study \emph{palindromic-gap partitions}: partitions whose sequence of successive differences between consecutive parts is a palindrome. We derive the generating function for this family, treating partitions with an odd and an even number of parts separately, and obtain a bivariate refinement tracking the number of parts. Specializing further, we give explicit closed formulas for the number of palindromic-gap partitions of $n$ into exactly $r$ parts. We then construct explicit bijections between palindromic-gap partitions with $2m+1$, respectively $2m$, parts and ordinary partitions with at most $m+1$ parts, in which the weight of a palindromic-gap partition is encoded by the largest part, or the two largest parts, of its image; composing these yields an explicit bijection between the odd and even families themselves. Finally, we show that a partition is palindromic-gap if and only if its Ferrers diagram, drawn with left-justified rows, is self-complementary under a $180^\circ$ rotation inside its own naturally associated rectangle, identifying palindromic-gap partitions with self-complementary partitions relative to this rectangle. This yields a partial answer to a question of Keith on the enumeration of complementable and self-complementary partitions: we give explicit generating functions for the self-complementary case and exhibit, at $n=15$, a partition that is complementable but not self-complementary, confirming that the two notions are genuinely distinct.
\end{abstract}

\maketitle

	\section{Introduction}
	\label{secIntro}

An (ordinary) integer partition $\lambda$ is an expression of a 
non-negative integer by an unordered sum of positive integers~\cite{Andrews1998}.  
\begin{align*}
	n = \lambda_1 + \lambda_2 + \cdots + \lambda_m
\end{align*}
The parts $\lambda_i$'s may be written 
in non-increasing or non-decreasing order, 
and a limited number of zeros may be allowed 
to appear as parts.  
The number $n$ being partitioned is called the \emph{weight}, 
denoted by $\left\vert \lambda \right\vert$
and the number of parts $m$ is called the \emph{length} of the partition, denoted by $\ell(\lambda)$.  
For example, 4 has the following five partitions. 
\begin{align*}
	4, \quad
	1+3, \quad 
	2+2, \quad 
	1+1+2, \quad 
	1+1+1+1.
\end{align*}

Throughout this paper, we write partitions by separating their parts with commas rather than plus signs. Thus, for example, the partition $1, 2, 5, 5, 7$ denotes the partition $1+2+5+5+7$. 

Partitions defined by restrictions on the gap sequence between consecutive parts form one of the most extensively studied classes in partition theory. They go back to the Rogers–Ramanujan identities and their vast generalizations by Gordon \cite{Gordon1961} and by Andrews and Gordon \cite{AndrewsGordon1974}, where the difference between consecutive parts is the defining constraint. Other works studying restrictions on the differences between 
parts of a partition include those of
Schur~\cite{Schur1926},
Bressoud~\cite{Bressoud1980},
Andrews~\cite{Andrews1971},
Yee~\cite{Yee2004},
Andrews, Beck, and Robbins~\cite{AndrewsBeckRobbins2015},
and Bogachev and Yakubovich~\cite{BogachevYakubovich2019}. In this paper we study a natural variant of this theme: partitions whose gap sequence is required to be \textit{symmetric}.	

Recently, Hemmer and Westrem~\cite{HemmerWestrem2024} studied \textit{palindrome partitions.} Although we use similar terminology, the partitions we study are unrelated to palindrome partitions. We require
the sequence of successive differences between consecutive parts to be
palindromic. To the best of our knowledge, palindromic-gap partitions, as defined here, have not been previously studied.

We say the partition  $\lambda=\lambda_1, \lambda_2, \ldots, \lambda_m$ has a palindromic gap sequence if the sequence of consecutive differences  

\begin{align*}
g(\lambda)=(\lambda_2-\lambda_1, \lambda_3-\lambda_2, \ldots, \lambda_m-\lambda_{m-1})
\end{align*}

is a palindrome, that is

\begin{align*}
\lambda_{i+1}-\lambda_i=\lambda_{m-i+1}-\lambda_{m-i}, \quad \text{$1\leq i \leq m-1$}.
\end{align*}

$\mathcal{P}_{pal}$ denotes the set of all partitions with this property. From now on, an element of $\mathcal{P}_{pal}$ will simply be called a palindromic-gap partition. The empty partition and all single-part partitions satisfy the condition vacuously. For example,  the partition $1, 3, 7, 9$ is a palindromic-gap partition, since the consecutive differences sequence $2, 4, 2$ is a palindrome. 

The remainder of the paper is organized as follows.

In Section~\ref{GF of partitions}, we construct the generating function for
$\mathcal{P}_{\mathrm{pal}}$, treating partitions with an odd and an even
number of parts separately, since the two cases require different
parametrizations of the palindromic gap sequence. Combining the two cases
(Theorem~\ref{GF}) yields a closed-form generating function for
$\mathcal{P}_{\mathrm{pal}}$, and we observe that it coincides with the
generating function of self-complementary ordinary partitions inside their
own naturally associated rectangles, though obtained here by a different,
partition-centered method. As a corollary, we refine this generating function
to a bivariate form that also keeps track of the number of parts.

In Section~\ref{enumeration}, we specialize the results of
Section~\ref{GF of partitions} to obtain, in
Corollaries~\ref{oddnumberofparts} and~\ref{evennumberofparts}, explicit
closed formulas --- again split according to the parity of the number of
parts --- for the number $p_{\mathrm{pg}}(n,r)$ of palindromic-gap partitions
of $n$ into exactly $r$ parts. Worked examples illustrate both formulas.

In Section~\ref{bijections}, we construct explicit bijections $\Phi_{2m+1}$
and $\Phi_{2m}$ (Theorems~\ref{thm:odd_bijection}
and~\ref{thm:even_bijection}) between palindromic-gap partitions and
ordinary partitions with at most $m+1$ parts, again treating the two
parities separately. These bijections are not weight-preserving in the
usual sense: rather than sending a partition of $n$ to a partition of $n$,
the weight of a palindromic-gap partition is encoded by the largest part of
its image, in the odd case, or by the two largest parts of its image, in
the even case. We describe the maps and their inverses explicitly, and
illustrate each with a worked example.

Finally, in Section~\ref{ferrers diagrams}, we give a geometric
characterization of palindromic-gap partitions. We show
(Theorem~\ref{thm:ferrers}) that $\lambda$ is a palindromic-gap partition if
and only if $\lambda_i+\lambda_{\ell+1-i}=\lambda_1+\lambda_\ell$ for all
$i$, and if and only if the Ferrers diagram of $\lambda$, drawn with
left-justified rows, is self-complementary under a $180^\circ$ rotation
inside the rectangle of dimensions $\ell\times(\lambda_1+\lambda_\ell)$.
This identifies the palindromic-gap partitions of $n$ precisely with the
ordinary partitions of $n$ that are self-complementary inside their own
natural bounding rectangle, so that the results of the two preceding
sections give explicit generating functions for the latter family as well.
As an immediate corollary, every palindromic-gap partition satisfies the
area condition $\ell(\lambda_1+\lambda_\ell)=2n$ that Keith~\cite{Keith2011}
uses to define a partition as \emph{complementable}; we then exhibit, at the
smallest possible value $n=15$, an explicit partition that is complementable
but not self-complementary, confirming with a concrete instance an
assertion that Keith records without proof or example. Taken together,
these results give a partial answer to a question posed by
Keith~\cite{Keith2011}, who asks for the enumeration, and for explicit
generating functions, of complementable and self-complementary partitions,
both ordinary and signed. We answer this question for ordinary partitions, and show that
complementability is a strictly weaker condition than self-complementarity.
The case of signed partitions, and the enumeration of complementable
partitions that fail to be self-complementary, remain open, as they did in
Keith's original question; we hope the techniques developed here will be
useful in resolving them.

\section{Generating Function of Partitions with Palindromic Gap Sequences}
\label{GF of partitions}
In this section, we construct the generating function of the palindromic-gap partitions. To distinguish palindromic-gap partitions according to the parity of their number of parts, we introduce the following subsets.
\[
\mathcal{P}^{\mathrm{odd}}_{\mathrm{pal}}
=
\{\lambda\in\mathcal{P}_{\mathrm{pal}}:\ell(\lambda)\text{ is odd}\},
\]

\[
\mathcal{P}^{\mathrm{even}}_{\mathrm{pal}}
=
\{\lambda\in\mathcal{P}_{\mathrm{pal}}:\ell(\lambda)\text{ is even}\}.
\]

\begin{theorem} \label{GF}
	The generating functions for palindromic-gap partitions with odd and even numbers of parts are respectively
	\[
	\sum_{\lambda\in\mathcal{P}^{\mathrm{odd}}_{\mathrm{pal}}}q^{|\lambda|}
	=
	\sum_{m\ge0}
	\frac{q^{2m+1}}
	{(1-q^{2m+1})^{m+1}},
	\]
	and
	\[
	\sum_{\lambda\in\mathcal{P}^{\mathrm{even}}_{\mathrm{pal}}}q^{|\lambda|}
	=
	\sum_{m\ge1}
	\frac{q^{2m}}
	{(1-q^{2m})^{m}(1-q^m)}.
	\]
	Consequently,
\begin{equation}	\label{ref:GF}
	\sum_{\lambda\in\mathcal{P}_{\mathrm{pal}}}q^{|\lambda|}
	=
	1+
	\sum_{m\ge0}
	\frac{q^{2m+1}}
	{(1-q^{2m+1})^{m+1}}
	+
	\sum_{m\ge1}
	\frac{q^{2m}}
	{(1-q^{2m})^{m}(1-q^m)}.
\end{equation}
\end{theorem}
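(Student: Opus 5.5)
The plan is to parametrize a palindromic-gap partition by its smallest part together with the free half of its gap sequence. Then I will compute the weight as a linear form in these parameters and read off a product of geometric series. Throughout, I write the parts in non-decreasing order $\lambda_1\le\lambda_2\le\cdots\le\lambda_\ell$, as in the definition of $g(\lambda)$. Set $a=\lambda_1\ge1$ and $g_i=\lambda_{i+1}-\lambda_i\ge0$. The map $\lambda\mapsto(a,g_1,\dots,g_{\ell-1})$ is a bijection between partitions with exactly $\ell$ parts and tuples with $a\ge1$ and $g_i\ge0$. Under it, the palindromic condition says exactly that $g_i=g_{\ell-i}$. Since $\lambda_j=a+\sum_{i<j}g_i$, we get
\[
|\lambda|=\ell a+\sum_{i=1}^{\ell-1}(\ell-i)\,g_i ,
\]
because the gap $g_i$ is counted once in each of the $\ell-i$ parts $\lambda_{i+1},\dots,\lambda_\ell$.

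\emph{Odd case, $\ell=2m+1$.} The palindromic gap sequence has length $2m$ and is determined freely by $g_1,\dots,g_m\ge0$, since $g_{2m+1-i}=g_i$. Pairing $g_i$ with its mirror gives the combined coefficient $(2m+1-i)+i=2m+1$. Hence
\[
|\lambda|=(2m+1)\,(a+g_1+\cdots+g_m).
\]
Summing over $a\ge1$ and $g_1,\dots,g_m\ge0$ gives $q^{2m+1}(1-q^{2m+1})^{-(m+1)}$. Summing over $m\ge0$ yields the first formula; the case $m=0$ covers the single-part partitions.

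\emph{Even case, $\ell=2m$ with $m\ge1$.} The gap sequence has length $2m-1$ and is determined by the paired gaps $g_1,\dots,g_{m-1}$ together with the unpaired middle gap $g_m$. Each pair $g_i=g_{2m-i}$ contributes with coefficient $(2m-i)+i=2m$. The middle gap contributes with coefficient $2m-m=m$, and $a$ contributes $2m$. So
\[
|\lambda|=2m\,(a+g_1+\cdots+g_{m-1})+m\,g_m,
\]
and the generating function is
\[
\frac{q^{2m}}{(1-q^{2m})^{m-1}(1-q^{2m})(1-q^m)}=\frac{q^{2m}}{(1-q^{2m})^{m}(1-q^{m})}.
\]
Summing over $m\ge1$ gives the second formula.

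Finally, \eqref{ref:GF} follows by adding the two series together with the term $1$ for the empty partition. The only step needing care is the bijectivity of the parametrization, namely that every choice of $a\ge1$ and the free half of the gaps produces a valid palindromic-gap partition of the stated length. This is immediate, because the mirrored gaps are nonnegative and the parts are then automatically positive and non-decreasing. Beyond that, the main point is simply getting the middle-gap coefficient $m$ right in the even case.
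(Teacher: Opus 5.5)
Your proposal is correct and follows essentially the same route as the paper. You parametrize by the smallest part $a=\lambda_1$ and the free half of the gap sequence, pair each gap with its mirror to get coefficient $2m+1$ (odd case) or $2m$ (even case, with the middle gap weighted by $m$), and read off a product of geometric series; your explicit check that this parametrization is a bijection onto all tuples with $a\ge 1$, $g_i\ge 0$ is a small addition the paper leaves implicit.
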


\begin{proof}
	Let
	\[
	\lambda=(\lambda_1,\lambda_2,\ldots,\lambda_\ell)
	\]
	be a partition written in non-decreasing order, and define its successive gaps by
	\[
	d_i=\lambda_{i+1}-\lambda_i,
	\qquad 1\leq i\leq \ell-1.
	\]
	Thus, each \(d_i\) is a nonnegative integer. The partition \(\lambda\) is
	palindromic-gap precisely when
	\[
	d_i=d_{\ell-i},
	\qquad 1\leq i\leq \ell-1.
	\]
	
	We first consider partitions with an odd number \(2m+1\) of parts. Such a
	partition is uniquely determined by its smallest part
	\[
	a=\lambda_1\geq 1
	\]
	and the \(m\) independent gaps
	\[
	d_1,d_2,\ldots,d_m\geq 0,
	\]
	since the remaining gaps satisfy
	\[
	d_{2m+1-i}=d_i,
	\qquad 1\leq i\leq m.
	\]
	Since
	\[
	\lambda_j
	=
	a+\sum_{i=1}^{j-1}d_i,
	\]
	we have
	\[
	|\lambda|
	=
	(2m+1)a+\sum_{i=1}^{2m}(2m+1-i)d_i.
	\]
	Pairing the terms corresponding to \(d_i\) and \(d_{2m+1-i}=d_i\), we obtain
	\[
	(2m+1-i)d_i+i d_i=(2m+1)d_i.
	\]
	Therefore,
	\[
	|\lambda|
	=
	(2m+1)\left(a+d_1+\cdots+d_m\right).
	\]
	Consequently, the generating function for palindromic-gap partitions with
	\(2m+1\) parts is
	\[
	\sum_{\substack{\lambda\in\mathcal{P}_{\mathrm{pal}}\\
			\ell(\lambda)=2m+1}}
	q^{|\lambda|}
	=
	\left(\sum_{a\geq 1}q^{(2m+1)a}\right)
	\prod_{i=1}^{m}
	\left(\sum_{d_i\geq 0}q^{(2m+1)d_i}\right).
	\]
	Hence,
	\[
	\sum_{\substack{\lambda\in\mathcal{P}_{\mathrm{pal}}\\
			\ell(\lambda)=2m+1}}
	q^{|\lambda|}
	=
	\frac{q^{2m+1}}{(1-q^{2m+1})^{m+1}}.
	\]
	Summing over \(m\geq 0\) gives
	\[
	\sum_{\lambda\in\mathcal{P}_{\mathrm{pal}}^{\mathrm{odd}}}
	q^{|\lambda|}
	=
	\sum_{m\geq 0}
	\frac{q^{2m+1}}{(1-q^{2m+1})^{m+1}}.
	\]
	
	We now consider partitions with an even number \(2m\) of parts, where
	\(m\geq 1\). Such a partition is uniquely determined by
	\[
	a=\lambda_1\geq 1
	\]
	and the \(m\) independent gaps
	\[
	d_1,d_2,\ldots,d_m\geq 0.
	\]
	Here \(d_m\) is the central gap, while
	\[
	d_{2m-i}=d_i,
	\qquad 1\leq i\leq m-1.
	\]
	As above,
	\[
	|\lambda|
	=
	2ma+\sum_{i=1}^{2m-1}(2m-i)d_i.
	\]
	For \(1\leq i\leq m-1\), the paired gaps \(d_i\) and \(d_{2m-i}=d_i\)
	contribute
	\[
	(2m-i)d_i+i d_i=2m d_i.
	\]
	The central gap \(d_m\) contributes \(m d_m\). Thus,
	\[
	|\lambda|
	=
	2m\left(a+d_1+\cdots+d_{m-1}\right)+m d_m.
	\]
	It follows that
	\[
	\begin{aligned}
		\sum_{\substack{\lambda\in\mathcal{P}_{\mathrm{pal}}\\
				\ell(\lambda)=2m}}
		q^{|\lambda|}
		&=
		\left(\sum_{a\geq 1}q^{2ma}\right)
		\prod_{i=1}^{m-1}
		\left(\sum_{d_i\geq 0}q^{2m d_i}\right)
		\left(\sum_{d_m\geq 0}q^{m d_m}\right)\\
		&=
		\frac{q^{2m}}
		{(1-q^{2m})^m(1-q^m)}.
	\end{aligned}
	\]
	Summing over \(m\geq 1\), we obtain
	\[
	\sum_{\lambda\in\mathcal{P}_{\mathrm{pal}}^{\mathrm{even}}}
	q^{|\lambda|}
	=
	\sum_{m\geq 1}
	\frac{q^{2m}}
	{(1-q^{2m})^m(1-q^m)}.
	\]
	
	Finally, adjoining the empty partition, which contributes \(1\), gives
	\[
	\sum_{\lambda\in\mathcal{P}_{\mathrm{pal}}}q^{|\lambda|}
	=
	1+
	\sum_{m\geq 0}
	\frac{q^{2m+1}}{(1-q^{2m+1})^{m+1}}
	+
	\sum_{m\geq 1}
	\frac{q^{2m}}
	{(1-q^{2m})^m(1-q^m)}.
	\]
\end{proof}

We note that the generating function of palindromic-gap partitions decomposes naturally into the contributions from partitions with odd and even numbers of parts.

\begin{remark}

The generating function (\ref{ref:GF}) obtained in this paper coincides with the generating function for self-complementary ordinary partitions in their natural rectangles. However, the derivation presented here is fundamentally different.
	
The classical approach begins with self-complementary partitions inside a prescribed rectangle and derives the corresponding generating function. In contrast, our approach starts with palindromic-gap partitions and derives the generating function directly from the palindromic-gap condition. The associated rectangle is determined naturally by the partition itself, rather than being fixed in advance. Specifically, if
\[
\lambda=(\lambda_1,\lambda_2,\ldots,\lambda_\ell)
\]
is a palindromic-gap partition, then its associated rectangle is the
\[
\ell \times (\lambda_1+\lambda_\ell)
\]
rectangle, where $\ell$ is the number of parts of $\lambda$. In contrast to the classical rectangle-centered approach, our construction is partition-centered. The ambient rectangle is not prescribed a priori; instead, it is uniquely determined by the partition itself. Consequently, the generating function enumerates self-complementary partitions relative to their natural rectangles.

\end{remark}

\begin{cor}
	Let $\mathcal{P}_{\mathrm{pal}}$ denote the set of palindromic-gap partitions. Then the bivariate generating function in which $q$ records the weight and $z$ records the number of parts is
	\[
	\sum_{\lambda\in\mathcal{P}_{\mathrm{pal}}}
	z^{\ell(\lambda)}q^{|\lambda|}
	=
	1+
	\sum_{m\geq0}
	\frac{z^{2m+1}q^{2m+1}}
	{(1-q^{2m+1})^{m+1}}
	+
	\sum_{m\geq1}
	\frac{z^{2m}q^{2m}}
	{(1-q^{2m})^{m}(1-q^{m})}.
	\]
\end{cor}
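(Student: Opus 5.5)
The plan is to refine the proof of Theorem~\ref{GF} by keeping track of the length. The parametrizations used there already fix the number of parts, so no new combinatorics is needed. We split $\mathcal{P}_{\mathrm{pal}}$ according to $\ell(\lambda)$:
\[
\sum_{\lambda\in\mathcal{P}_{\mathrm{pal}}} z^{\ell(\lambda)}q^{|\lambda|}
= 1 + \sum_{m\ge 0} z^{2m+1}\!\!\sum_{\substack{\lambda\in\mathcal{P}_{\mathrm{pal}}\\ \ell(\lambda)=2m+1}}\!\! q^{|\lambda|}
+ \sum_{m\ge 1} z^{2m}\!\!\sum_{\substack{\lambda\in\mathcal{P}_{\mathrm{pal}}\\ \ell(\lambda)=2m}}\!\! q^{|\lambda|}.
\]
Here the $1$ accounts for the empty partition, the only one of length $0$. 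This step is justified because every nonempty partition has a unique positive length, and for fixed length the factor $z^{\ell(\lambda)}$ is constant and can be pulled out of the inner sum.

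Next we substitute the length-specific generating functions computed inside the proof of Theorem~\ref{GF}. The bijection $\lambda\leftrightarrow(a,d_1,\ldots,d_m)$ with $a\ge1$, $d_i\ge0$ gives
\[
\frac{q^{2m+1}}{(1-q^{2m+1})^{m+1}} \quad\text{for length } 2m+1,
\qquad
\frac{q^{2m}}{(1-q^{2m})^{m}(1-q^{m})} \quad\text{for length } 2m.
\]
Multiplying each by the corresponding power of $z$ gives exactly the three pieces of the claimed formula.

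The only point needing care is to confirm that those per-length expressions really appear in the proof, and not only the summed versions in the statement of Theorem~\ref{GF}. They do appear there, as intermediate displayed identities. There is no real obstacle beyond stating that the parametrization is a bijection onto palindromic-gap partitions of each fixed length. As a consistency check, setting $z=1$ recovers \eqref{ref:GF}.
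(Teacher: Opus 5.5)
Your proposal is correct and follows the paper's route: split by length, pull out $z^{\ell(\lambda)}$, and substitute the per-length generating functions from the proof of Theorem~\ref{GF}. You are in fact slightly more careful than the paper's one-line proof, because you note that the argument needs the fixed-length identities displayed inside that proof, not just the summed odd/even formulas in the theorem's statement.
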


\begin{proof}
	The result follows immediately from Theorem~\ref{GF}. Indeed, each partition with exactly $2m+1$ parts contributes an additional factor of $z^{2m+1}$, while each partition with exactly $2m$ parts contributes a factor of $z^{2m}$. Summing over all possible values of $m$ yields the desired generating function.
\end{proof}

\begin{cor}\label{cor:largestpart}
	Let $\mathcal{P}_{\mathrm{pal}}$ denote the set of palindromic-gap
	partitions. Then the bivariate generating function in which $q$ records
	the weight and $w$ records the largest part $\lambda_\ell$ is

 \begin{align*}
	\sum_{\lambda\in\mathcal{P}_{\mathrm{pal}}}
	w^{\lambda_\ell}q^{|\lambda|}
	&=
	1+
	\sum_{m\geq0}
	\frac{wq^{2m+1}}
	{\bigl(1-wq^{2m+1}\bigr)\bigl(1-w^2q^{2m+1}\bigr)^{m}} \\
	&+ 
	\sum_{m\geq1}
	\frac{wq^{2m}}
	{\bigl(1-wq^{2m}\bigr)\bigl(1-w^2q^{2m}\bigr)^{m-1}\bigl(1-wq^{m}\bigr)}.
\end{align*}
\end{cor}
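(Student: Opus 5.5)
The plan is to reuse the parametrization from the proof of Theorem~\ref{GF}, now tracking the largest part $\lambda_\ell$ alongside the weight. In each parity class a palindromic-gap partition is determined by its smallest part $a=\lambda_1\ge 1$ and the independent gaps $d_1,\dots,d_m\ge 0$. So it suffices to express $\lambda_\ell$ as a linear form in these parameters. The joint weight $w^{\lambda_\ell}q^{|\lambda|}$ then factors as a product of monomials in independent variables, and each factor is summed as a geometric series.

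\emph{Odd case.} Take $\ell=2m+1$. We have
\[
\lambda_\ell=a+\sum_{i=1}^{2m}d_i=a+2(d_1+\cdots+d_m),
\]
since each $d_i$ with $i\le m$ appears twice by the palindromic condition. The proof of Theorem~\ref{GF} gives $|\lambda|=(2m+1)(a+d_1+\cdots+d_m)$. Hence
\[
w^{\lambda_\ell}q^{|\lambda|}=(wq^{2m+1})^{a}\prod_{i=1}^{m}(w^2q^{2m+1})^{d_i}.
\]
Summing over $a\ge1$ and $d_i\ge0$ gives
\[
\frac{wq^{2m+1}}{(1-wq^{2m+1})(1-w^2q^{2m+1})^{m}}.
\]

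\emph{Even case.} Take $\ell=2m$ with $m\ge1$. The gaps $d_1,\dots,d_{m-1}$ each occur twice and the central gap $d_m$ occurs once, so
\[
\lambda_\ell=a+2(d_1+\cdots+d_{m-1})+d_m.
\]
The proof of Theorem~\ref{GF} gives $|\lambda|=2m(a+d_1+\cdots+d_{m-1})+m d_m$. The summand therefore factors as
\[
(wq^{2m})^a\prod_{i=1}^{m-1}(w^2q^{2m})^{d_i}\,(wq^m)^{d_m}.
\]
Summing the geometric series gives
\[
\frac{wq^{2m}}{(1-wq^{2m})(1-w^2q^{2m})^{m-1}(1-wq^m)}.
\]

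Finally, add $1$ for the empty partition, where by convention $\lambda_\ell$ contributes $w^0$, and sum over $m$. The only point needing care is the multiplicity count in the expression for $\lambda_\ell$, in particular that the central gap in the even case is counted once. The bijectivity of the parametrization was already established in the proof of Theorem~\ref{GF}, so no real obstacle is expected.
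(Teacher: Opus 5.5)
Your proposal is correct and matches the paper's proof: write $\lambda_\ell=a+2(d_1+\cdots+d_m)$ in the odd case and $\lambda_\ell=a+2(d_1+\cdots+d_{m-1})+d_m$ in the even case, combine these with the weight formulas from the proof of Theorem~\ref{GF}, and sum the independent geometric series. Then adjoin $1$ for the empty partition.
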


\begin{proof}
	We follow the notation of the proof of Theorem~\ref{GF}. For
	$\lambda=(\lambda_1,\ldots,\lambda_\ell)\in\mathcal{P}_{\mathrm{pal}}$ with
	$a=\lambda_1$ and successive gaps $d_i=\lambda_{i+1}-\lambda_i$, the
	largest part is
	\[
	\lambda_\ell = a+\sum_{i=1}^{\ell-1}d_i.
	\]
	
	If $\ell=2m+1$, the independent gaps are $d_1,\ldots,d_m\geq0$, and since
	$d_{2m+1-i}=d_i$ for $1\leq i\leq m$, we have
	\[
	\lambda_\ell
	=
	a+2\bigl(d_1+\cdots+d_m\bigr).
	\]
	Together with $|\lambda|=(2m+1)(a+d_1+\cdots+d_m)$ from the proof of
	Theorem~\ref{GF}, this gives
	\[
	\sum_{\substack{\lambda\in\mathcal{P}_{\mathrm{pal}}\\ \ell(\lambda)=2m+1}}
	w^{\lambda_\ell}q^{|\lambda|}
	=
	\left(\sum_{a\geq1}w^{a}q^{(2m+1)a}\right)
	\prod_{i=1}^{m}
	\left(\sum_{d_i\geq0}w^{2d_i}q^{(2m+1)d_i}\right)
	=
	\frac{wq^{2m+1}}
	{\bigl(1-wq^{2m+1}\bigr)\bigl(1-w^2q^{2m+1}\bigr)^{m}}.
	\]
	
	If $\ell=2m$, the independent gaps are $d_1,\ldots,d_{m-1}$ together with
	the central gap $d_m$, and
	\[
	\lambda_\ell
	=
	a+2\bigl(d_1+\cdots+d_{m-1}\bigr)+d_m.
	\]
	Together with
	$|\lambda|=2m(a+d_1+\cdots+d_{m-1})+md_m$, this gives
	
\begin{align*}
	\sum_{\substack{\lambda\in\mathcal{P}_{\mathrm{pal}}\\ \ell(\lambda)=2m}}
	w^{\lambda_\ell}q^{|\lambda|}
	&=
	\left(\sum_{a\geq1}w^{a}q^{2ma}\right)
	\prod_{i=1}^{m-1}
	\left(\sum_{d_i\geq0}w^{2d_i}q^{2md_i}\right)
	\left(\sum_{d_m\geq0}w^{d_m}q^{md_m}\right)\\
	&=
	\frac{wq^{2m}}
	{\bigl(1-wq^{2m}\bigr)\bigl(1-w^2q^{2m}\bigr)^{m-1}\bigl(1-wq^{m}\bigr)}.
\end{align*}
	
	Summing over $m\geq0$ (odd case) and $m\geq1$ (even case), and adjoining
	the empty partition, which contributes $1$, yields the stated formula.
\end{proof}

\section{Enumeration by the Number of Parts} \label{enumeration}

The generating functions derived in the previous section immediately yield explicit closed formulas for the number of palindromic-gap partitions with a prescribed number of parts. In this section, we derive these formulas separately for partitions with odd and even numbers of parts. 

Let $p_{\mathrm{pg}}(n,r)$ denote the number of palindromic-gap partitions of \(n\) into exactly \(r\) parts.

\begin{cor} \label{oddnumberofparts}
	For every integer \(m\geq0\),
	\[
	\sum_{n\geq0}p_{\mathrm{pg}}(n,2m+1)q^n
	=
	\frac{q^{2m+1}}{(1-q^{2m+1})^{m+1}}.
	\]
	Consequently,
	\[
	p_{\mathrm{pg}}(n,2m+1)=
	\begin{cases}
		\displaystyle
		\binom{\frac{n}{2m+1}+m-1}{m},
		&
		2m+1\mid n,\\[3mm]
		0,
		&
		\text{otherwise}.
	\end{cases}
	\]
\end{cor}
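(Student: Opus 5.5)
The first identity is simply the per-length generating function already computed in the proof of Theorem~\ref{GF}. There we showed that a palindromic-gap partition with $2m+1$ parts corresponds bijectively to a tuple $(a,d_1,\ldots,d_m)$ with $a\geq1$ and $d_i\geq0$, and that its weight is $(2m+1)(a+d_1+\cdots+d_m)$. Summing $q^{|\lambda|}$ over these tuples gives
\[
\sum_{n\geq0}p_{\mathrm{pg}}(n,2m+1)q^n=\frac{q^{2m+1}}{(1-q^{2m+1})^{m+1}}.
\]
No new argument is needed for this part; I would just cite that computation.

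For the coefficient formula, I would substitute $x=q^{2m+1}$ and expand with the negative binomial series:
\[
\frac{x}{(1-x)^{m+1}}=\sum_{k\geq0}\binom{k+m}{m}x^{k+1}=\sum_{j\geq1}\binom{j+m-1}{m}x^{j}.
\]
Every exponent of $q$ that occurs is therefore a multiple $j(2m+1)$ with $j\geq1$. Reading off coefficients gives $p_{\mathrm{pg}}(n,2m+1)=\binom{n/(2m+1)+m-1}{m}$ when $(2m+1)\mid n$, and $0$ otherwise.

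As a sanity check, I would also count directly. With $j=n/(2m+1)$, we need the number of solutions of $a+d_1+\cdots+d_m=j$ with $a\geq1$ and $d_i\geq0$. Setting $a'=a-1$, this becomes the number of nonnegative solutions of a sum of $m+1$ variables equal to $j-1$, which by stars and bars is $\binom{j-1+m}{m}$.

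The only delicate point is the boundary case $n=0$. Here $(2m+1)\mid 0$, but the formula gives $\binom{m-1}{m}$, which equals $0$ for $m\geq1$. For $m=0$ it reads $\binom{-1}{0}$, and that must be interpreted as $0$, since there is no partition of $0$ with one part. I would state that the formula is meant for $n\geq1$ (equivalently $j\geq1$), and note that for $n\ge1$ all binomials involved have nonnegative top entry, so there is no ambiguity.
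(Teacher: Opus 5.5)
Your proposal is correct and follows the paper's proof: both expand $1/(1-x)^{m+1}$ with $x=q^{2m+1}$ and read off the coefficient at $n=(2m+1)s$ with $s\ge1$, obtaining $\binom{s+m-1}{m}$. Your caveat about $n=0$ is also right. For $m=0$ the displayed formula would give $\binom{-1}{0}=1\neq p_{\mathrm{pg}}(0,1)=0$ under the usual convention, and the paper's proof handles this case only implicitly by restricting to $s\ge1$.
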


\begin{proof}
	Using the binomial expansion
	\[
	\frac{1}{(1-x)^{m+1}}
	=
	\sum_{j\geq0}
	\binom{j+m}{m}x^j,
	\]
	with \(x=q^{2m+1}\), we obtain
	\[
	\frac{q^{2m+1}}{(1-q^{2m+1})^{m+1}}
	=
	\sum_{j\geq0}
	\binom{j+m}{m}
	q^{(2m+1)(j+1)}.
	\]
	Hence the coefficient of \(q^n\) is nonzero only when
	\(n=(2m+1)s\) for some integer \(s\geq1\). Since
	\(s=j+1\), we obtain
	\[
	p_{\mathrm{pg}}(n,2m+1)
	=
	\binom{s+m-1}{m},
	\]
	which is precisely the stated formula.
\end{proof}

\begin{ex}
	Consider palindromic-gap partitions of \(20\) into exactly \(5\) parts.
	Here \(2m+1=5\), so \(m=2\). Since \(5\mid 20\), the odd-length
	coefficient formula gives
	\[
	p_{\mathrm{pg}}(20,5)
	=
	\binom{\frac{20}{5}+2-1}{2}
	=
	\binom{5}{2}
	=
	10.
	\]
	
	Indeed, a palindromic-gap partition with five parts can be written as
	\[
	\lambda
	=
	a,\,
	a+d_1,\,
	a+d_1+d_2,\,
	a+d_1+2d_2,\,
	a+2d_1+2d_2,
	\]
	where \(a\geq 1\) and \(d_1,d_2\geq0\). Its weight is
	\[
	|\lambda|=5(a+d_1+d_2).
	\]
	Thus, the condition \(|\lambda|=20\) is equivalent to
	\[
	a+d_1+d_2=4.
	\]
	Equivalently,
	\[
	(a-1)+d_1+d_2=3,
	\]
	which has
	\[
	\binom{3+3-1}{3-1}=\binom{5}{2}=10
	\]
	nonnegative integer solutions. The corresponding partitions are
	\[
	\begin{aligned}
		&(1,1,4,7,7), &&(1,2,4,6,7),\\
		&(1,3,4,5,7), &&(1,4,4,4,7),\\
		&(2,2,4,6,6), &&(2,3,4,5,6),\\
		&(2,4,4,4,6), &&(3,3,4,5,5),\\
		&(3,4,4,4,5), &&(4,4,4,4,4).
	\end{aligned}
	\]
	Each of these partitions has a palindromic gap sequence.
\end{ex}

\begin{cor} \label{evennumberofparts}
	For every integer \(m\geq1\),
	\[
	\sum_{n\geq0}p_{\mathrm{pg}}(n,2m)q^n
	=
	\frac{q^{2m}}
	{(1-q^{2m})^m(1-q^m)}.
	\]
	Consequently,
	\[
	p_{\mathrm{pg}}(n,2m)
	=
	\begin{cases}
		\displaystyle
		\binom{\left\lfloor\frac{n}{2m}\right\rfloor+m-1}{m},
		&
		m\mid n,\; n\geq2m,\\[3mm]
		0,
		&
		\text{otherwise}.
	\end{cases}
	\]
\end{cor}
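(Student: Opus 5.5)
The first assertion is just the even-length computation from the proof of Theorem~\ref{GF}, which I will quote. A palindromic-gap partition with $2m$ parts is determined by $a\ge1$, the paired gaps $d_1,\dots,d_{m-1}\ge0$, and the central gap $d_m\ge0$. Its weight is $2m(a+d_1+\cdots+d_{m-1})+md_m$, so the generating function factors as stated. For the closed formula I will extract coefficients exactly as in Corollary~\ref{oddnumberofparts}, then collapse the resulting finite sum with the hockey-stick identity.

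First I will expand the two factors separately:
\[
\frac{1}{(1-q^{2m})^m}=\sum_{j\ge0}\binom{j+m-1}{m-1}q^{2mj},
\qquad
\frac{1}{1-q^m}=\sum_{k\ge0}q^{mk}.
\]
Multiplying by $q^{2m}$, every exponent that occurs has the form $m(2+2j+k)$. So $p_{\mathrm{pg}}(n,2m)=0$ unless $m\mid n$ and $n\ge 2m$; this gives the ``otherwise'' case. Now write $n=mN$ with $N\ge2$. The coefficient of $q^n$ is the sum of $\binom{j+m-1}{m-1}$ over all $j\ge0$ for which $k=N-2-2j\ge0$, that is, over $0\le j\le J$ with $J=\lfloor (N-2)/2\rfloor$.

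Next I will apply the hockey-stick identity $\sum_{j=0}^{J}\binom{j+m-1}{m-1}=\binom{J+m}{m}$. Since $\lfloor (N-2)/2\rfloor=\lfloor N/2\rfloor-1$ and $\lfloor N/2\rfloor=\lfloor n/(2m)\rfloor$, this yields $\binom{\lfloor n/(2m)\rfloor+m-1}{m}$, as claimed.

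The only delicate point is this floor bookkeeping: the parity of $N=n/m$ matters, and when $N$ is odd the central gap $d_m$ absorbs the leftover $m$. I will check both parities. For $N=2s$, $J=s-1$; for $N=2s+1$, again $J=s-1$. In both cases $\lfloor n/(2m)\rfloor=s$, so the final formula agrees.

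Alternatively, one can argue combinatorially. Set $S=(a-1)+d_1+\cdots+d_{m-1}$, so that $N-2=2S+d_m$. For each admissible $S$, the number of tuples $(a-1,d_1,\dots,d_{m-1})$ with sum $S$ is $\binom{S+m-1}{m-1}$, and $d_m$ is then determined. This is the same sum, and it confirms the count.
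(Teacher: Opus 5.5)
Your proposal is correct and follows essentially the same route as the paper: expand $1/(1-q^{2m})^m$ and $1/(1-q^m)$, observe that every exponent has the form $m(2+2j+k)$, sum $\binom{j+m-1}{m-1}$ over $0\le j\le\lfloor (N-2)/2\rfloor$, and collapse the sum with the hockey-stick identity. The differences are cosmetic: the paper first substitutes $x=q^m$, and your closing combinatorial recount via $S=(a-1)+d_1+\cdots+d_{m-1}$ is an extra consistency check that the paper does not include.
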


\begin{proof}
	Let \(x=q^m\). Then
	\[
	\frac{q^{2m}}
	{(1-q^{2m})^m(1-q^m)}
	=
	\frac{x^2}{(1-x^2)^m(1-x)}.
	\]
	Using
	\[
	\frac{1}{(1-x^2)^m}
	=
	\sum_{j\geq0}
	\binom{j+m-1}{m-1}x^{2j},
	\]
	and
	\[
	\frac{1}{1-x}
	=
	\sum_{k\geq0}x^k,
	\]
	we obtain
	\[
	\frac{x^2}{(1-x^2)^m(1-x)}
	=
	\sum_{j,k\geq0}
	\binom{j+m-1}{m-1}
	x^{2+2j+k}.
	\]
	Therefore, if \(n=mN\), then
	\[
	p_{\mathrm{pg}}(mN,2m)
	=
	\sum_{j=0}^{\lfloor(N-2)/2\rfloor}
	\binom{j+m-1}{m-1}.
	\]
	Applying the hockey-stick identity
	\[
	\sum_{j=0}^{L}
	\binom{j+m-1}{m-1}
	=
	\binom{L+m}{m},
	\]
	with
	\[
	L=\left\lfloor\frac{N-2}{2}\right\rfloor,
	\]
	yields
	\[
	p_{\mathrm{pg}}(mN,2m)
	=
	\binom{\left\lfloor\frac{N}{2}\right\rfloor+m-1}{m}.
	\]
	Since
	\[
	\left\lfloor\frac{N}{2}\right\rfloor
	=
	\left\lfloor\frac{n}{2m}\right\rfloor,
	\]
	the desired formula follows.
\end{proof}

\begin{ex}
	Consider palindromic-gap partitions of \(12\) into exactly \(4\) parts.
	Here \(2m=4\), so \(m=2\). Since \(2\mid 12\), the even-length
	coefficient formula gives
	\[
	p_{\mathrm{pg}}(12,4)
	=
	\binom{\left\lfloor\frac{12}{4}\right\rfloor+2-1}{2}
	=
	\binom{4}{2}
	=
	6.
	\]
	
	A palindromic-gap partition with four parts has the form
	\[
	\lambda
	=
	(a,\,
	a+d_1,\,
	a+d_1+d_2,\,
	a+2d_1+d_2),
	\]
	where \(a\geq1\) and \(d_1,d_2\geq0\). Its weight is
	\[
	|\lambda|
	=
	4a+4d_1+2d_2.
	\]
	Therefore, the condition \(|\lambda|=12\) is equivalent to
	\[
	2a+2d_1+d_2=6.
	\]
	The nonnegative solutions, subject to \(a\geq1\), yield the six
	partitions
	\[
	\begin{aligned}
		&(1,1,5,5), &&(1,2,4,5), &&(1,3,3,5),\\
		&(2,2,4,4), &&(2,3,3,4), &&(3,3,3,3).
	\end{aligned}
	\]
	Their respective gap sequences are
	\[
	(0,4,0),\quad
	(1,2,1),\quad
	(2,0,2),\quad
	(0,2,0),\quad
	(1,0,1),\quad
	(0,0,0),
	\]
	and hence all six are palindromic-gap partitions.
\end{ex}
\section{Bijections with Ordinary Partitions} \label{bijections}

In this section, we establish explicit bijections between palindromic-gap partitions and ordinary partitions. The constructions differ according to whether the number of parts is odd or even, and are therefore presented separately. These bijections reveal an unexpected combinatorial structure underlying palindromic-gap partitions and provide a direct combinatorial interpretation of the generating functions obtained in the previous section. We emphasize that these bijections are not weight-preserving in the usual sense. Instead, the weight of a palindromic-gap partition is encoded by the largest part, or the two largest parts, of its image, depending on the parity of the number of parts.

Throughout this section, if an ordinary partition has fewer than two
parts, the missing largest parts are taken to be zero. In particular, the largest part of the empty partition is taken to be zero.

\begin{theorem}\label{thm:odd_bijection}
	For every integer $m\geq 0$, there exists an explicit bijection
	\[
	\Phi_{2m+1}:\mathcal{P}_{\mathrm{pal}}^{\,2m+1}
	\longrightarrow
	\mathcal{P}_{\leq m+1},
	\]
	where $\mathcal{P}_{\mathrm{pal}}^{\,2m+1}$ denotes the set of
	palindromic-gap partitions with exactly $2m+1$ parts, and
	$\mathcal{P}_{\leq m+1}$ denotes the set of ordinary partitions with at
	most $m+1$ parts.
	
	Moreover, if
	\[
	\Phi_{2m+1}(\lambda)=\mu
	\]
	and
	\[
	\mu=(\mu_1,\mu_2,\ldots,\mu_{\ell(\mu)})
	\]
	is written in non-decreasing order, then
	\[
	|\lambda|
	=
	(2m+1)\bigl(\mu_{\ell(\mu)}+1\bigr).
	\]
\end{theorem}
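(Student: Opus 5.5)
We build $\Phi_{2m+1}$ directly from the parametrization in the proof of Theorem~\ref{GF}. A palindromic-gap partition $\lambda$ with $2m+1$ parts is determined uniquely by its smallest part $a=\lambda_1\geq 1$ and its first $m$ gaps $d_1,\ldots,d_m\geq 0$. Every such tuple $(a,d_1,\ldots,d_m)$ occurs, since the reconstructed sequence $\lambda_j=a+\sum_{i<j}d_i$ is non-decreasing, positive, and has palindromic gaps. Its weight is $|\lambda|=(2m+1)(a+d_1+\cdots+d_m)$.

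The plan is to encode the tuple by an ordinary partition through partial sums. Set
\[
\mu_{m+1}=(a-1)+d_1+\cdots+d_m,\qquad \mu_{k}=(a-1)+d_1+\cdots+d_{k-1}\quad(1\leq k\leq m+1).
\]
Define $\Phi_{2m+1}(\lambda)$ to be the partition whose parts are the positive entries among $\mu_1\leq\mu_2\leq\cdots\leq\mu_{m+1}$. Equivalently, $\mu_k=\lambda_k-1$ for $1\le k\le m+1$, which is the lower half of $\lambda$ through the middle part, each reduced by one.

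The sequence is non-decreasing because the $d_i\ge 0$. After discarding zero entries we get a partition with at most $m+1$ parts, and its largest part is $\mu_{m+1}=a+d_1+\cdots+d_m-1$. This gives $|\lambda|=(2m+1)(\mu_{\ell(\mu)}+1)$. The convention that the empty partition has largest part $0$ covers the case $\lambda=(1,\ldots,1)$.

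For the inverse, take $\nu\in\mathcal{P}_{\le m+1}$ and pad it on the left with zeros to a non-decreasing sequence $0\le\nu_1\le\cdots\le\nu_{m+1}$. Set $\lambda_k=\nu_k+1$ for $k\le m+1$. Then set $\lambda_{2m+2-k}=2\lambda_{m+1}-\lambda_k$, i.e.\ $\lambda_j+\lambda_{2m+2-j}=2\lambda_{m+1}$.

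We then check three things:
\begin{enumerate}
\item The result is non-decreasing, because reflecting the gaps $d_k=\nu_{k+1}-\nu_k\ge 0$ keeps them nonnegative.
\item It has positive parts.
\item Its gap sequence is palindromic by construction.
\end{enumerate}
Both composites are the identity, since padding with zeros and deleting zeros are mutually inverse once the length $m+1$ is fixed.

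The main point needing care is the bookkeeping of zero parts, so that $\Phi_{2m+1}$ is well defined and injective into partitions with \emph{at most} $m+1$ parts. This is exactly why we shift by $a-1$ rather than $a$: it makes the padding unique. The weight identity then follows immediately from the formula $|\lambda|=(2m+1)(a+d_1+\cdots+d_m)$ in the proof of Theorem~\ref{GF}.
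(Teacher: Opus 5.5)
Your proposal is correct and is essentially the paper's proof: the paper's image $(a_1,\ldots,a_m,R)$, with $a_j=(s-1)+d_1+\cdots+d_{j-1}$ and $R=(s-1)+d_1+\cdots+d_m$, is exactly your $(\lambda_1-1,\ldots,\lambda_{m+1}-1)$ with leading zeros deleted, and the weight identity comes from the same formula $|\lambda|=(2m+1)(s+d_1+\cdots+d_m)$. The only cosmetic difference is in the inverse: you rebuild the upper half of $\lambda$ by the reflection $\lambda_j+\lambda_{2m+2-j}=2\lambda_{m+1}$, whereas the paper recovers the gaps $d_i=b_{i+1}-b_i$ and mirrors them, and these are the same construction.
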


\begin{proof}
	Let
	\[
	\lambda=(\lambda_1,\lambda_2,\ldots,\lambda_{2m+1})
	\in\mathcal{P}_{\mathrm{pal}}^{\,2m+1},
	\]
	where the parts are written in non-decreasing order. Let
	\[
	s=\lambda_1
	\]
	denote the smallest part, and let
	\[
	d_i=\lambda_{i+1}-\lambda_i,\qquad 1\le i\le m.
	\]
	Since $\lambda$ is a palindromic-gap partition, its gap sequence is
	\[
	(d_1,d_2,\ldots,d_m,d_m,\ldots,d_2,d_1).
	\]
	
	Define
	\[
	x_0=s-1,\qquad
	x_i=d_i,\quad 1\le i\le m,
	\]
	and let
	\[
	a_j=x_0+x_1+\cdots+x_{j-1},
	\qquad
	1\le j\le m,
	\]
	together with
	\[
	R=x_0+x_1+\cdots+x_m.
	\]
	
We define
\begin{center}
	$\Phi_{2m+1}(\lambda)=(a_1,a_2,\dots,a_m,R)$,
\end{center}
 where $0\le a_1\le\cdots\le a_m\le R$. Note that the leading entries of this sequence can be zero (this occurs exactly when $s=1$ together with vanishing initial gaps $d_1,d_2,\dots$). Since a partition cannot have $0$ as a part, we delete these leading zeros before regarding $(a_1,\dots,a_m,R)$ as a partition --- thereby obtaining $\Phi_{2m+1}(\lambda)$, an ordinary partition with at most $m+1$ parts.
	
	Conversely, let
	\[
	\mu\in\mathcal{P}_{\le m+1}.
	\]
	By adjoining leading zeros if necessary, write
	\[
	\mu=(b_1,b_2,\ldots,b_{m+1}),
	\]
	where
	\[
	0\le b_1\le b_2\le\cdots\le b_{m+1}.
	\]
	Define
	\[
	x_0=b_1,
	\qquad
	x_i=b_{i+1}-b_i,
	\quad
	1\le i\le m.
	\]
	Then set
	\[
	s=x_0+1,
	\qquad
	d_i=x_i,
	\quad
	1\le i\le m.
	\]
	These data uniquely determine a partition whose gap sequence is
	\[
	(d_1,d_2,\ldots,d_m,d_m,\ldots,d_2,d_1),
	\]
	so the inverse map is well defined. Hence $\Phi_{2m+1}$ is a bijection.
	
	Finally,
	\[
	|\lambda|
	=
	(2m+1)(s+d_1+\cdots+d_m).
	\]
	Since
	\[
	s+d_1+\cdots+d_m
	=
	1+x_0+\cdots+x_m
	=
	R+1,
	\]
	we obtain
	\[
	|\lambda|
	=
	(2m+1)(R+1).
	\]
	Because $R$ is the largest part of $\mu$, it follows that
	\[
	|\lambda|
	=
	(2m+1)\bigl(\mu_{\ell(\mu)}+1\bigr).
	\]
The largest part of the empty partition is taken to be zero; this convention is only needed when
\[
\lambda=(1,1,\ldots,1),
\]
in which case
\[
\mu=\Phi_{2m+1}(\lambda)
\]
is the empty partition.
	This completes the proof.
\end{proof}

\begin{ex}\label{ex:odd_bijection}
	Consider the palindromic-gap partition
	\[
	\lambda=2,3,5,7,8
	\in\mathcal{P}_{\mathrm{pal}}^{\,5}.
	\]
	Here $2m+1=5$, so $m=2$. The smallest part is
	\[
	s=2,
	\]
	and the gap sequence is
	\[
	(1,2,2,1).
	\]
	Thus,
	\[
	d_1=1
	\qquad\text{and}\qquad
	d_2=2.
	\]
	According to the definition of $\Phi_5$, we set
	\[
	x_0=s-1=1,\qquad x_1=d_1=1,\qquad x_2=d_2=2.
	\]
	The corresponding partial sums are
	\[
	a_1=x_0=1,
	\qquad
	a_2=x_0+x_1=2,
	\]
	and
	\[
	R=x_0+x_1+x_2=4.
	\]
	Therefore,
	\[
	\Phi_5(\lambda)=1,2,4.
	\]
	
	We now apply the inverse map to
	\[
	\mu=1,2,4.
	\]
	Since $\mu$ already has $m+1=3$ parts, no leading zeros are required.
	Writing
	\[
	(b_1,b_2,b_3)=(1,2,4),
	\]
	we recover
	\[
	x_0=b_1=1,
	\]
	\[
	x_1=b_2-b_1=1,
	\qquad
	x_2=b_3-b_2=2.
	\]
	Hence
	\[
	s=x_0+1=2,
	\qquad
	d_1=x_1=1,
	\qquad
	d_2=x_2=2.
	\]
	The resulting gap sequence is
	\[
	(1,2,2,1),
	\]
	which, together with the smallest part $s=2$, reconstructs
	\[
	\lambda=2,3,5,7,8.
	\]
Thus, these two constructions are inverses of each other.
	
	Moreover,
	\[
	|\lambda|=2+3+5+7+8=25,
	\]
	while
	\[
	(2m+1)\bigl(\mu_{\ell(\mu)}+1\bigr)
	=
	5(4+1)=25,
	\]
	in agreement with Theorem~\ref{thm:odd_bijection}.
\end{ex}

\begin{theorem}\label{thm:even_bijection}
	For every integer $m\geq 1$, there exists an explicit bijection
	\[
	\Phi_{2m}:\mathcal{P}_{\mathrm{pal}}^{\,2m}
	\longrightarrow
	\mathcal{P}_{\leq m+1},
	\]
	where $\mathcal{P}_{\mathrm{pal}}^{\,2m}$ denotes the set of
	palindromic-gap partitions with exactly $2m$ parts, and
	$\mathcal{P}_{\leq m+1}$ denotes the set of ordinary partitions with at
	most $m+1$ parts.
	
	Moreover, if
	\[
	\Phi_{2m}(\lambda)=\mu
	\]
	and
	\[
	\mu=(\mu_1,\mu_2,\ldots,\mu_{\ell(\mu)})
	\]
	is written in non-decreasing order, then
	\[
	|\lambda|
	=
	m\bigl(\mu_{\ell(\mu)-1}+\mu_{\ell(\mu)}+2\bigr),
	\]
	where the second largest part is taken to be zero when $\mu$ has only
	one part.
\end{theorem}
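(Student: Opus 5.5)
The plan is to mirror the proof of Theorem~\ref{thm:odd_bijection}, using the even-length parametrization from the proof of Theorem~\ref{GF}. A partition $\lambda\in\mathcal{P}_{\mathrm{pal}}^{\,2m}$ is determined by its smallest part $s=\lambda_1\ge1$, the paired gaps $d_1,\ldots,d_{m-1}\ge0$, and the central gap $d_m\ge0$. From that proof we know
\[
|\lambda| = 2m(s+d_1+\cdots+d_{m-1}) + m\,d_m .
\]

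\textbf{The map $\Phi_{2m}$.} Set $x_0=s-1$ and $x_i=d_i$ for $1\le i\le m$. Take the partial sums
\[
a_j = x_0+x_1+\cdots+x_{j-1}, \qquad 1\le j\le m,
\]
and put $T=a_m=(s-1)+d_1+\cdots+d_{m-1}$ and $R=T+d_m$. Define $\Phi_{2m}(\lambda)$ to be $(a_1,\ldots,a_m,R)$ with its leading zeros deleted. This sequence satisfies $0\le a_1\le\cdots\le a_m\le R$ and has $m+1$ entries, so the result lies in $\mathcal{P}_{\le m+1}$. The key design choice is that the central gap is added only once, in the last step. This makes the two largest entries of the padded sequence equal to $T$ and $T+d_m$.

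\textbf{The inverse.} Given $\mu\in\mathcal{P}_{\le m+1}$, pad it with leading zeros to $(b_1,\ldots,b_{m+1})$ with $0\le b_1\le\cdots\le b_{m+1}$. Set $s=b_1+1$ and $d_i=b_{i+1}-b_i$ for $1\le i\le m$. These data give $s\ge1$ and all $d_i\ge0$. Building $\lambda$ from the gap sequence $(d_1,\ldots,d_{m-1},d_m,d_{m-1},\ldots,d_1)$ yields a palindromic-gap partition with $2m$ parts. The compositions in both orders are the identity, because taking partial sums and taking successive differences are mutually inverse. Deleting and re-adjoining leading zeros is harmless, since the padded length is fixed at $m+1$.

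\textbf{The weight identity.} Since $m\ge1$, the padded sequence has at least two entries. Its two largest entries are $b_m=T$ and $b_{m+1}=R$. Hence
\[
m\bigl(T+R+2\bigr) = m\bigl(2(s+d_1+\cdots+d_{m-1})+d_m\bigr) = |\lambda|.
\]

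\textbf{Main obstacle.} The only delicate point is matching $b_m$ with the statement's convention for $\mu_{\ell(\mu)-1}$ when $\mu$ has fewer than two nonzero parts.
\begin{itemize}
\item If $\mu$ has exactly one part, then all of $b_1,\ldots,b_m$ are zero, so $b_m=0$. This agrees with the convention that the second largest part is zero.
\item If $\mu$ is empty, then $T=R=0$, which happens exactly when $\lambda=(1,\ldots,1)$. Both conventions then give the value $0$, and the weight identity reads $|\lambda|=2m$.
\end{itemize}
I will check these degenerate cases explicitly and then conclude.
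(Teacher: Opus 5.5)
Your proposal is correct and matches the paper's proof essentially step for step. It uses the same partial-sum encoding with $x_0=s-1$ and the central gap added only in the last step, the same difference-based inverse, and the same weight computation $|\lambda|=m(a_m+R+2)$; your explicit check of the one-part case ($b_m=0$) is a small addition, since the paper only discusses the empty case.
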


\begin{proof}
	Let
	\[
	\lambda=(\lambda_1,\lambda_2,\ldots,\lambda_{2m})
	\in\mathcal{P}_{\mathrm{pal}}^{\,2m},
	\]
	where the parts are written in non-decreasing order. Let
	\[
	s=\lambda_1.
	\]
	The palindromic gap sequence of $\lambda$ has the form
	\[
	(d_1,d_2,\ldots,d_{m-1},e,d_{m-1},\ldots,d_2,d_1),
	\]
	where
	\[
	d_i=\lambda_{i+1}-\lambda_i,
	\qquad
	1\le i\le m-1,
	\]
	and
	\[
	e=\lambda_{m+1}-\lambda_m.
	\]
	
	Define
	\[
	x_0=s-1,\qquad
	x_i=d_i,\quad 1\le i\le m-1,\qquad
	x_m=e.
	\]
	For
	\[
	1\le j\le m,
	\]
	let
	\[
	a_j=x_0+x_1+\cdots+x_{j-1},
	\]
	and define
	\[
	R=x_0+x_1+\cdots+x_m.
	\]
	
We define
\begin{center}
	$\Phi_{2m}(\lambda)=(a_1,a_2,\dots,a_m,R)$,
\end{center}
where $0\le a_1\le\cdots\le a_m\le R$. Note that the leading entries of this sequence can be zero (this occurs exactly when $s=1$ together with vanishing initial gaps $d_1,d_2,\dots$). Since a partition cannot have $0$ as a part, we delete these leading zeros before regarding $(a_1,\dots,a_m,R)$ as a partition --- thereby obtaining $\Phi_{2m}(\lambda)$, an ordinary partition with at most $m+1$ parts.

	Conversely, let
	\[
	\mu\in\mathcal{P}_{\le m+1}.
	\]
	By adjoining leading zeros if necessary, write
	\[
	\mu=(b_1,b_2,\ldots,b_{m+1}),
	\]
	with
	\[
	0\le b_1\le b_2\le\cdots\le b_{m+1}.
	\]
	Define
	\[
	x_0=b_1,
	\qquad
	x_i=b_{i+1}-b_i,
	\quad
	1\le i\le m.
	\]
	Then let
	\[
	s=x_0+1,\qquad
	d_i=x_i,\quad 1\le i\le m-1,\qquad
	e=x_m.
	\]
	These values uniquely reconstruct a partition whose gap sequence is
	\[
	(d_1,d_2,\ldots,d_{m-1},e,d_{m-1},\ldots,d_2,d_1).
	\]
	Thus $\Phi_{2m}$ is a bijection.
	
	To determine the weight,
	\[
	|\lambda|
	=
	2m(s+d_1+\cdots+d_{m-1})+me.
	\]
	Since
	\[
	a_m
	=
	x_0+x_1+\cdots+x_{m-1}
	=
	s+d_1+\cdots+d_{m-1}-1,
	\]
	and
	\[
	R=a_m+e,
	\]
	we obtain
	\begin{align*}
		|\lambda|
		&=
		m\bigl(2a_m+e+2\bigr)\\
		&=
		m(a_m+R+2).
	\end{align*}
	Since $a_m$ and $R$ are respectively the second largest and the largest parts of $\mu$, we conclude that
	\[
	|\lambda|
	=
	m\bigl(\mu_{\ell(\mu)-1}+\mu_{\ell(\mu)}+2\bigr).
	\]
	If $\mu$ is the empty partition (which occurs precisely when
	$\lambda=(1,1,\dots,1)$), both $\mu_{\ell(\mu)-1}$ and $\mu_{\ell(\mu)}$ are
	taken to be zero, consistent with the convention in the statement of
	Theorem \ref{thm:even_bijection}.
	This completes the proof.
\end{proof}

\begin{ex}\label{ex:even_bijection}
	Consider the palindromic-gap partition
	\[
	\lambda=2,3,5,8,10,11
	\in\mathcal{P}_{\mathrm{pal}}^{\,6}.
	\]
	Here $2m=6$, so $m=3$. The smallest part is
	\[
	s=2,
	\]
	and the gap sequence is
	\[
	(1,2,3,2,1).
	\]
	Thus,
	\[
	d_1=1,\qquad d_2=2,\qquad e=3.
	\]
	According to the definition of $\Phi_6$, we set
	\[
	x_0=s-1=1,\qquad
	x_1=d_1=1,\qquad
	x_2=d_2=2,\qquad
	x_3=e=3.
	\]
	The corresponding partial sums are
	\[
	a_1=x_0=1,
	\]
	\[
	a_2=x_0+x_1=2,
	\]
	and
	\[
	a_3=x_0+x_1+x_2=4.
	\]
	Furthermore,
	\[
	R=x_0+x_1+x_2+x_3=7.
	\]
	Therefore,
	\[
	\Phi_6(\lambda)=1,2,4,7.
	\]
	
	We now apply the inverse map to
	\[
	\mu=1,2,4,7.
	\]
	Since $\mu$ already has $m+1=4$ parts, no leading zeros are required.
	Writing
	\[
	(b_1,b_2,b_3,b_4)=(1,2,4,7),
	\]
	we obtain
	\[
	x_0=b_1=1,
	\]
	\[
	x_1=b_2-b_1=1,
	\qquad
	x_2=b_3-b_2=2,
	\qquad
	x_3=b_4-b_3=3.
	\]
	Consequently,
	\[
	s=x_0+1=2,
	\qquad
	d_1=x_1=1,
	\qquad
	d_2=x_2=2,
	\qquad
	e=x_3=3.
	\]
	The resulting gap sequence is
	\[
	(1,2,3,2,1),
	\]
	which, together with the smallest part $s=2$, reconstructs
	\[
	\lambda=2,3,5,8,10,11.
	\]
Thus, these two constructions are inverses of each other.
	
	Finally,
	\[
	|\lambda|
	=
	2+3+5+8+10+11
	=
	39,
	\]
	whereas
	\[
	m\bigl(
	\mu_{\ell(\mu)-1}
	+
	\mu_{\ell(\mu)}
	+2
	\bigr)
	=
	3(4+7+2)
	=
	39,
	\]
	as asserted in Theorem~\ref{thm:even_bijection}.
\end{ex}

\begin{cor}
	For every $m \ge 1$, there is an explicit bijection
	\[
	\Psi_m := \Phi_{2m}^{-1} \circ \Phi_{2m+1} \;:\; \mathcal{P}_{\mathrm{pal}}^{2m+1} \longrightarrow \mathcal{P}_{\mathrm{pal}}^{2m}.
	\]
	Explicitly, if $\lambda \in \mathcal{P}_{\mathrm{pal}}^{2m+1}$ has smallest part $s$ and gap sequence
	\[
	(d_1,\dots,d_m,d_m,\dots,d_1),
	\]
	then $\Psi_m(\lambda)$ is the partition in $\mathcal{P}_{\mathrm{pal}}^{2m}$ with smallest part $s$ and gap sequence
	\[
	(d_1,\dots,d_{m-1},d_m,d_{m-1},\dots,d_1).
	\]
	That is, $\Psi_m$ simply reinterprets the last gap $d_m$ of $\lambda$ as the middle gap $e$ of the shorter palindromic-gap partition, leaving $s,d_1,\dots,d_{m-1}$ unchanged. In particular, writing $T = s+d_1+\cdots+d_m$,
	\[
	|\lambda| = (2m+1)T, \qquad |\Psi_m(\lambda)| = m(2T - d_m).
	\]
\end{cor}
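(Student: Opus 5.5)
The plan is to obtain $\Psi_m$ by composing the two bijections already constructed, and then to read off its explicit form by tracking the intermediate data $(x_0,\dots,x_m)$. First, $\Psi_m$ is a bijection. Theorem~\ref{thm:odd_bijection} gives a bijection $\Phi_{2m+1}:\mathcal{P}_{\mathrm{pal}}^{2m+1}\to\mathcal{P}_{\le m+1}$, and Theorem~\ref{thm:even_bijection} gives a bijection $\Phi_{2m}:\mathcal{P}_{\mathrm{pal}}^{2m}\to\mathcal{P}_{\le m+1}$ for $m\ge1$. Both have the same codomain, so $\Phi_{2m}^{-1}\circ\Phi_{2m+1}$ is a well-defined bijection $\mathcal{P}_{\mathrm{pal}}^{2m+1}\to\mathcal{P}_{\mathrm{pal}}^{2m}$. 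Its inverse is $\Phi_{2m+1}^{-1}\circ\Phi_{2m}$.

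For the explicit description, take $\lambda$ with smallest part $s$ and independent gaps $d_1,\dots,d_m$. By the proof of Theorem~\ref{thm:odd_bijection}, $\Phi_{2m+1}(\lambda)$ is the partition obtained from
\[
(a_1,\dots,a_m,R),\qquad a_j=x_0+\cdots+x_{j-1},\quad R=x_0+\cdots+x_m,
\]
with $x_0=s-1$ and $x_i=d_i$, after deleting leading zeros. Now apply the inverse map from the proof of Theorem~\ref{thm:even_bijection}. Padding with leading zeros gives back exactly $(b_1,\dots,b_{m+1})=(a_1,\dots,a_m,R)$, because both vectors have length $m+1$. Its successive differences therefore recover $x_0,\dots,x_m$. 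The inverse construction then outputs the partition with smallest part $x_0+1=s$, gaps $d_i=x_i$ for $1\le i\le m-1$, and middle gap $e=x_m=d_m$. This is exactly the stated form of $\Psi_m(\lambda)$.

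The one point needing care is the deletion and re-adjunction of leading zeros. I will argue that both maps use the same length $m+1$ and the same padding convention, so padding exactly undoes deletion.

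For the weights, Theorem~\ref{GF} gives $|\lambda|=(2m+1)(s+d_1+\cdots+d_m)=(2m+1)T$. For the even partition with $a=s$, paired gaps $d_1,\dots,d_{m-1}$ and central gap $d_m$, the same proof gives
\[
|\Psi_m(\lambda)|=2m(s+d_1+\cdots+d_{m-1})+m d_m=2m(T-d_m)+m d_m=m(2T-d_m).
\]
There is no real obstacle here. The only step requiring care is the bookkeeping of leading zeros in the composition.
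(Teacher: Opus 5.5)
Your proposal is correct and follows essentially the same route as the paper. You compose $\Phi_{2m+1}$ with $\Phi_{2m}^{-1}$, note that re-padding $(a_1,\dots,a_m,R)$ to length $m+1$ recovers it exactly so its successive differences return $x_0,\dots,x_m$ (giving $s'=s$, $d_i'=d_i$, $e'=d_m$), and then read off $|\lambda|=(2m+1)T$ and $|\Psi_m(\lambda)|=2m(s+d_1+\cdots+d_{m-1})+md_m=m(2T-d_m)$ from the parametrized weight formulas.
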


\begin{proof}
	Let $\lambda \in \mathcal{P}_{\mathrm{pal}}^{2m+1}$, with smallest part $s$ and gaps $d_1,\dots,d_m$ as in the proof of Theorem \ref{thm:odd_bijection}. Set $x_0=s-1$, $x_i=d_i$ for $1\le i\le m$, and
	\[
	a_j = x_0+x_1+\cdots+x_{j-1} \ (1\le j\le m), \qquad R = x_0+x_1+\cdots+x_m,
	\]
	so that $\Phi_{2m+1}(\lambda) = (a_1,\dots,a_m,R)$, with $0\le a_1\le\cdots\le a_m\le R$, after deleting leading zeros.
	
	To apply $\Phi_{2m}^{-1}$, we regard $\Phi_{2m+1}(\lambda)$ as an element of $\mathcal{P}_{\le m+1}$ and, following the proof of Theorem \ref{thm:even_bijection}, restore it to a sequence of exactly $m+1$ terms by adjoining leading zeros if necessary. Since $(a_1,\dots,a_m,R)$ already has exactly $m+1$ terms and satisfies $0\le a_1\le\cdots\le a_m\le R$, no adjustment is needed: we simply set
	\[
	b_i = a_i \ (1\le i\le m), \qquad b_{m+1} = R.
	\]
	
	Now apply the inverse construction of Theorem \ref{thm:even_bijection}: set
	\[
	x_0' = b_1, \qquad x_i' = b_{i+1}-b_i \ (1\le i \le m),
	\]
	and then
	\[
	s' = x_0'+1, \qquad d_i' = x_i' \ (1\le i\le m-1), \qquad e' = x_m'.
	\]
	
	Since $b_i = a_i = x_0+x_1+\cdots+x_{i-1}$ for $1\le i\le m$ and $b_{m+1}=R = x_0+\cdots+x_m$, we get
	\[
	x_0' = b_1 = a_1 = x_0 = s-1 \implies s' = s,
	\]
	\[
	x_i' = b_{i+1}-b_i = a_{i+1}-a_i = x_i = d_i \ (1\le i\le m-1) \implies d_i' = d_i,
	\]
	\[
	x_m' = b_{m+1}-b_m = R - a_m = x_m = d_m \implies e' = d_m.
	\]
	
	Hence $\Psi_m(\lambda)$ is exactly the partition in $\mathcal{P}_{\mathrm{pal}}^{2m}$ with smallest part $s$ and the gap sequence
	$(d_1,\dots,d_{m-1},d_m,d_{m-1},\dots,d_1)$, as claimed.
	
	For the weight, Theorem \ref{thm:odd_bijection} gives $|\lambda| = (2m+1)(s+d_1+\cdots+d_m) = (2m+1)T$. Applying the weight formula of Theorem \ref{thm:even_bijection} to $\Psi_m(\lambda)$, with smallest part $s'=s$ and gaps $d_1',\dots,d_{m-1}',e' = d_1,\dots,d_{m-1},d_m$, we obtain
	\begin{align*}
		|\Psi_m(\lambda)| &= 2m\bigl(s'+d_1'+\cdots+d_{m-1}'\bigr)+m\,e' \\
		&= 2m\bigl(s+d_1+\cdots+d_{m-1}\bigr)+m\,d_m \\
		&= 2m(T-d_m)+m\,d_m \\
		&= m(2T-d_m).
	\end{align*}

	This completes the proof.
\end{proof}

Having established explicit bijections with ordinary partitions, we now turn to a geometric characterization of palindromic-gap partitions in terms of their Ferrers diagrams.

\section{A Characterization via Ferrers Diagrams} \label{ferrers diagrams}
In this section, we present a geometric characterization of palindromic-gap partitions in terms of their Ferrers diagrams. We show that the defining palindromic-gap condition is equivalent to the Ferrers diagram being self-complementary under a $180^\circ$ rotation inside a suitable rectangle.

We recall that a Ferrers diagram is said to be \emph{self-complementary} inside a rectangle if, after taking its complement in the rectangle and rotating it by $180^\circ$, one recovers the original Ferrers diagram.

\begin{theorem}\label{thm:ferrers}
Let
\[
\lambda=(\lambda_1,\lambda_2,\ldots,\lambda_\ell)
\]
be a partition written in non-decreasing order. Then the following are equivalent:

(i) $\lambda$ is a palindromic-gap partition.

(ii)
\[
\lambda_i+\lambda_{\ell+1-i}
=
\lambda_1+\lambda_\ell,
\qquad
1\le i\le\ell.
\]

(iii) When Ferrers diagrams are drawn with left-justified rows, the Ferrers diagram
of $\lambda$ is self-complementary under a $180^\circ$ rotation inside the rectangle
of dimensions
\[
\ell\times(\lambda_1+\lambda_\ell).
\]
\end{theorem}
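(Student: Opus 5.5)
I will prove (i)$\Leftrightarrow$(ii) and (ii)$\Leftrightarrow$(iii) separately, working with the gaps $d_j=\lambda_{j+1}-\lambda_j$ as in the proof of Theorem~\ref{GF}.

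For (i)$\Rightarrow$(ii), I will use a telescoping argument. For $1\le i\le \ell$, write
\[
\lambda_i-\lambda_1=\sum_{j=1}^{i-1}d_j
\qquad\text{and}\qquad
\lambda_\ell-\lambda_{\ell+1-i}=\sum_{j=\ell+1-i}^{\ell-1}d_j .
\]
Reindexing the second sum by $j\mapsto \ell-j$ turns it into $\sum_{j=1}^{i-1}d_{\ell-j}$. The palindromic condition $d_j=d_{\ell-j}$ then makes the two sums equal, and hence $\lambda_i+\lambda_{\ell+1-i}=\lambda_1+\lambda_\ell$.

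For (ii)$\Rightarrow$(i), I will subtract the identity in (ii) at index $i$ from the same identity at index $i+1$. This gives
\[
(\lambda_{i+1}-\lambda_i)-(\lambda_{\ell+1-i}-\lambda_{\ell-i})=0,
\]
that is, $d_i=d_{\ell-i}$ for $1\le i\le \ell-1$, which is exactly the palindromic-gap condition. Both directions are routine index bookkeeping.

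For (ii)$\Leftrightarrow$(iii), I first fix the geometry. Place the diagram in the rectangle with $\ell$ rows and $N=\lambda_1+\lambda_\ell$ columns, and put row $i$ of length $\lambda_i$ flush left. Since $\lambda_i\le\lambda_\ell<N$ (because $\lambda_1\ge 1$), the complement of the diagram in row $i$ is the right-justified segment of length $N-\lambda_i$. A $180^\circ$ rotation of the rectangle sends row $i$ to row $\ell+1-i$ and turns right-justified segments into left-justified ones. So the rotated complement is the left-justified diagram whose row $\ell+1-i$ has length $N-\lambda_i$; equivalently, its row $i$ has length $N-\lambda_{\ell+1-i}$. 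This is again non-decreasing, so it is a genuine Ferrers diagram of the same left-justified type. Self-complementarity means this diagram equals the original, which holds if and only if $\lambda_i=N-\lambda_{\ell+1-i}$ for all $i$. That is precisely (ii).

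The main obstacle is not computational. It is pinning down the convention in (iii): which rows are flipped, that the complement is right-justified in each row, and that the rotated complement is compared row by row with the original in the same orientation. Once these conventions are stated explicitly, the equivalence is a single line. I would also note the consistency check that (ii) forces total area $\ell N/2=|\lambda|$, matching the area condition $\ell(\lambda_1+\lambda_\ell)=2n$ mentioned in the introduction.
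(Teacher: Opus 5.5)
Your proof is correct and follows the paper's argument essentially step for step: the telescoping comparison of $\lambda_i-\lambda_1$ with $\lambda_\ell-\lambda_{\ell+1-i}$ for (i)$\Rightarrow$(ii), subtracting consecutive instances of (ii) for the converse, and the row-by-row condition $(\lambda_1+\lambda_\ell)-\lambda_i=\lambda_{\ell+1-i}$ for (ii)$\Leftrightarrow$(iii). You also spell out details the paper leaves implicit: the reindexing $j\mapsto \ell-j$, that each row's complement is right-justified, and that the rotated complement is again a left-justified Ferrers diagram.
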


\begin{proof}
	For $1\leq i\leq \ell-1$, define the successive gaps of $\lambda$ by
	\[
	d_i=\lambda_{i+1}-\lambda_i.
	\]
	Since the parts of $\lambda$ are written in non-decreasing order, we have
	$d_i\geq 0$.
	
	Suppose first that $\lambda$ is a palindromic-gap partition. Then
	\[
	d_i=d_{\ell-i},
	\qquad 1\leq i\leq \ell-1.
	\]
	For each $1\leq i\leq \ell$, we have
	\[
	\lambda_i-\lambda_1
	=\sum_{j=1}^{i-1}d_j
	\]
	and
	\[
	\lambda_\ell-\lambda_{\ell+1-i}
	=\sum_{j=\ell+1-i}^{\ell-1}d_j.
	\]
	Using the palindromicity of the gap sequence, we obtain
	\[
	\sum_{j=\ell+1-i}^{\ell-1}d_j
	=\sum_{j=1}^{i-1}d_j.
	\]
	Therefore,
	\[
	\lambda_i-\lambda_1
	=\lambda_\ell-\lambda_{\ell+1-i},
	\]
	and hence
	\[
	\lambda_i+\lambda_{\ell+1-i}
	=\lambda_1+\lambda_\ell
	\]
	for every $1\leq i\leq\ell$.
	
	Conversely, suppose that
	\[
	\lambda_i+\lambda_{\ell+1-i}
	=\lambda_1+\lambda_\ell,
	\qquad 1\leq i\leq\ell.
	\]
	Applying this identity with indices $i$ and $i+1$, and subtracting the two
	equalities, gives
	\[
	\lambda_{i+1}-\lambda_i
	=\lambda_{\ell+1-i}-\lambda_{\ell-i},
	\qquad 1\leq i\leq\ell-1.
	\]
	Thus
	\[
	d_i=d_{\ell-i},
	\qquad 1\leq i\leq\ell-1,
	\]
	so the successive gap sequence of $\lambda$ is palindromic. Hence $\lambda$
	is a palindromic-gap partition.
	
	It remains to verify the Ferrers-diagram interpretation. Set
	\[
	C=\lambda_1+\lambda_\ell.
	\]
	Consider the Ferrers diagram of $\lambda$ inside the rectangle of dimensions
	\[
	\ell\times C.
	\]
	The complement of the $i$th row has length $C-\lambda_i$. After a
	$180^\circ$ rotation, this row occupies the position of the
	$(\ell+1-i)$th row. Consequently, the Ferrers diagram is
	self-complementary under a $180^\circ$ rotation if and only if
	\[
	C-\lambda_i=\lambda_{\ell+1-i},
	\qquad 1\leq i\leq\ell.
	\]
	This is equivalent to
	\[
	\lambda_i+\lambda_{\ell+1-i}
	=C
	=\lambda_1+\lambda_\ell,
	\qquad 1\leq i\leq\ell.
	\]
	Therefore, all three conditions are equivalent.
\end{proof}

\begin{ex}
Consider the partition
\[
\lambda=1,3,7,11,13.
\]
Its successive gap sequence is
\[
(2,4,4,2),
\]
which is palindromic. Furthermore,
\[
\lambda_1+\lambda_5
=
\lambda_2+\lambda_4
=
2\lambda_3
=
14.
\]
Equivalently,
\[
\lambda_i+\lambda_{6-i}
=
\lambda_1+\lambda_5
=
14,
\qquad 1\leq i\leq 5.
\]
Hence, by Theorem~\ref{thm:ferrers}, the Ferrers diagram of $\lambda$ is
self-complementary under a $180^\circ$ rotation inside the rectangle
of dimensions
\[
5\times(\lambda_1+\lambda_5)=5\times14.
\]
	This is illustrated below.
	
	\medskip
	
\begin{center}
	\begin{tikzpicture}[scale=0.38]
		
		
		\begin{scope}
			\node at (7,6.4)
			{\small Ferrers diagram of $\lambda$};
			
			\draw[step=1,gray!50] (0,0) grid (14,5);
			
			\foreach \x in {0}
			\fill[gray!35] (\x,0) rectangle ++(1,1);
			
			\foreach \x in {0,...,2}
			\fill[gray!35] (\x,1) rectangle ++(1,1);
			
			\foreach \x in {0,...,6}
			\fill[gray!35] (\x,2) rectangle ++(1,1);
			
			\foreach \x in {0,...,10}
			\fill[gray!35] (\x,3) rectangle ++(1,1);
			
			\foreach \x in {0,...,12}
			\fill[gray!35] (\x,4) rectangle ++(1,1);
		\end{scope}
		
		
		\draw[->,thick]
		(14.8,2.5)--(21.2,2.5)
		node[midway,fill=white,inner sep=2pt]
		{\small complement};
		
		
		\begin{scope}[xshift=23cm]
			
			\node at (7,6.4)
			{\small Complement};
			
			\draw[step=1,gray!50] (0,0) grid (14,5);
			
			\foreach \x in {1,...,13}
			\fill[gray!35] (\x,0) rectangle ++(1,1);
			
			\foreach \x in {3,...,13}
			\fill[gray!35] (\x,1) rectangle ++(1,1);
			
			\foreach \x in {7,...,13}
			\fill[gray!35] (\x,2) rectangle ++(1,1);
			
			\foreach \x in {11,...,13}
			\fill[gray!35] (\x,3) rectangle ++(1,1);
			
			\foreach \x in {13}
			\fill[gray!35] (\x,4) rectangle ++(1,1);
		\end{scope}
		
		
		\draw[->,thick]
		(30,-0.4)--(30,-3.0);
		
		\node[left] at (29.7,-1.7)
		{\small rotate $180^\circ$};
		
		
		\begin{scope}[xshift=23cm,yshift=-10cm]
			
			\node at (7,6.4)
			{\small Rotated complement};
			
			\draw[step=1,gray!50] (0,0) grid (14,5);
			
			\foreach \x in {0}
			\fill[gray!35] (\x,0) rectangle ++(1,1);
			
			\foreach \x in {0,...,2}
			\fill[gray!35] (\x,1) rectangle ++(1,1);
			
			\foreach \x in {0,...,6}
			\fill[gray!35] (\x,2) rectangle ++(1,1);
			
			\foreach \x in {0,...,10}
			\fill[gray!35] (\x,3) rectangle ++(1,1);
			
			\foreach \x in {0,...,12}
			\fill[gray!35] (\x,4) rectangle ++(1,1);
		\end{scope}
		
	\end{tikzpicture}
\end{center}
	
\end{ex}

\begin{cor}
	Let
	\[
	\lambda=(\lambda_1,\lambda_2,\ldots,\lambda_\ell)
	\]
	be a palindromic-gap partition. Then
	\[
	|\lambda|
	=
	\frac{\ell(\lambda_1+\lambda_\ell)}{2}.
	\]
	Equivalently, the area of the rectangle of dimensions
	\[
	\ell\times(\lambda_1+\lambda_\ell)
	\]
	is equal to twice the weight of $\lambda$.
\end{cor}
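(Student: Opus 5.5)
The plan is to derive the identity directly from condition (ii) of Theorem~\ref{thm:ferrers}, using the standard pairing trick for arithmetic-type sums. Since $\lambda$ is palindromic-gap, Theorem~\ref{thm:ferrers} gives
\[
\lambda_i+\lambda_{\ell+1-i}=\lambda_1+\lambda_\ell,\qquad 1\le i\le\ell.
\]
The computation then has three steps.

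\begin{enumerate}
\item Write $|\lambda|=\sum_{i=1}^{\ell}\lambda_i$.
\item Reindex the same sum via $i\mapsto \ell+1-i$, which is a permutation of $\{1,\dots,\ell\}$, to get $|\lambda|=\sum_{i=1}^{\ell}\lambda_{\ell+1-i}$.
\item Add the two expressions and apply (ii) termwise:
\[
2|\lambda|=\sum_{i=1}^{\ell}\bigl(\lambda_i+\lambda_{\ell+1-i}\bigr)=\ell(\lambda_1+\lambda_\ell).
\]
\end{enumerate}

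Dividing by $2$ gives the formula. The equivalent statement about the area of the $\ell\times(\lambda_1+\lambda_\ell)$ rectangle is the same identity rewritten.

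As a cross-check, and as an alternative route, condition (iii) of Theorem~\ref{thm:ferrers} gives the result geometrically. The diagram and its complement partition the rectangle, and the $180^\circ$ rotation identifies the complement with the diagram itself. Rotation preserves the number of cells, so the rectangle's area is twice $|\lambda|$.

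One could also check the formula against the proof of Theorem~\ref{GF}. In the odd case $\ell=2m+1$ we have $\lambda_1+\lambda_\ell=2(a+d_1+\cdots+d_m)$. In the even case $\ell=2m$ we have $\lambda_1+\lambda_\ell=2(a+d_1+\cdots+d_{m-1})+d_m$. Both agree with the weight formulas obtained there.

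There is no real obstacle here. The only point needing care is that the pairing $i\leftrightarrow\ell+1-i$ correctly handles the middle term when $\ell$ is odd. Summing over all $i$ rather than over pairs avoids that case split, because the middle index then simply contributes $2\lambda_{m+1}=\lambda_1+\lambda_\ell$. The empty partition ($\ell=0$) is trivially consistent, with both sides equal to $0$.
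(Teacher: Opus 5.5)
Your proposal is correct and matches the paper's proof: the paper likewise invokes condition (ii) of Theorem~\ref{thm:ferrers} and sums $\lambda_i+\lambda_{\ell+1-i}$ over all $1\le i\le\ell$ to obtain $2|\lambda|=\ell(\lambda_1+\lambda_\ell)$. Your geometric and generating-function cross-checks are valid, but the argument does not need them.
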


\begin{proof}
	By Theorem \ref{thm:ferrers},
	\[
	\lambda_i+\lambda_{\ell+1-i}
	=
	\lambda_1+\lambda_\ell,
	\qquad
	1\le i\le\ell.
	\]
	Therefore,
	\[
	2|\lambda|
	=
	\sum_{i=1}^{\ell}
	\left(\lambda_i+\lambda_{\ell+1-i}\right)
	=
	\ell(\lambda_1+\lambda_\ell),
	\]
	which gives
	\[
	|\lambda|
	=
	\frac{\ell(\lambda_1+\lambda_\ell)}{2}.
	\]
	The second statement follows immediately since the rectangle of dimensions
	$\ell\times(\lambda_1+\lambda_\ell)$ has area
	$\ell(\lambda_1+\lambda_\ell)=2|\lambda|$.
\end{proof}

The quantity $\ell(\lambda_1+\lambda_\ell)$ appearing above is precisely the area
condition that Keith~\cite{Keith2011} uses to define a partition of $n$ as
\emph{complementable}: a partition $\lambda = (\lambda_1,\dots,\lambda_k)$ is
complementable if
\[
k(\lambda_1+\lambda_k) = 2n,
\]
this being a necessary condition for $\lambda$ to be self-complementary inside
some rectangle, since such a rectangle must have height $k$ and width
$\lambda_1+\lambda_k$. Our corollary therefore shows that every
palindromic-gap partition is automatically complementable in the sense of
Keith. However, as Keith himself remarks without giving an example,
complementability is strictly weaker than genuine self-complementarity: The first complementable partition that is not self-complementary in the associated box occurs for $n = 15$. We give an explicit example below.

\begin{ex} \label{ex:n15}
	Consider $\lambda = 1,2,2,5,5$, a partition of $n=15$ with $\ell=5$ parts.
	
	\medskip
	\noindent\textbf{Step 1: $\lambda$ is complementable.} By Keith's area
	condition,
	\[
	\ell(\lambda_1+\lambda_\ell) = 5(1+5) = 30 = 2n,
	\]
	so $\lambda$ is complementable in the sense of~\cite{Keith2011}.
	
	\medskip
	\noindent\textbf{Step 2: $\lambda$ is not self-complementary.} Self-complementarity
	in the $5\times 6$ box requires $\lambda_i + \lambda_{6-i} = 6$ for every
	$1\le i\le 5$. Checking each pair:
	\[
	\lambda_1+\lambda_5 = 1+5 = 6, \qquad
	\lambda_2+\lambda_4 = 2+5 = 7 \neq 6.
	\]
	The second identity fails, so $\lambda$ is \emph{not} self-complementary in its
	associated rectangle, even though it satisfies the area condition of Step~1.
	
	\medskip
	\noindent\textbf{Step 3: consistency with the gap sequence.} The gap sequence
	of $\lambda$ is
	\[
	g = (\lambda_2-\lambda_1,\ \lambda_3-\lambda_2,\ \lambda_4-\lambda_3,\ \lambda_5-\lambda_4) = (1,0,3,0),
	\]
	which is not palindromic ($d_1=1\neq d_4=0$), consistent with Step~2 via
	Theorem \ref{thm:ferrers}.
	
	\medskip
	\noindent Thus $\lambda$ is complementable but not self-complementary.
\end{ex}

\section{Conclusion}
In the closing section of his paper, Keith poses the following question: how
many complementable and self-complementary partitions of $n$ are there, for
both ordinary and signed partitions, and can one give explicit generating
functions for these two families that also describe how they differ? Our
results give a partial answer to this question. Theorem \ref{thm:ferrers} identifies the
self-complementary partitions of $n$ with the palindromic-gap partitions of
$n$, Theorem~\ref{GF} gives an explicit generating
function for the latter, and
Corollaries~\ref{oddnumberofparts}--\ref{evennumberofparts} refine this by
number of parts. We do not address the signed case, nor do we give a generating function for complementable partitions that are \emph{not} self-complementary --- Example~\ref{ex:n15} above shows only that the two notions diverge, without enumerating the differencen. These remain open, as originally posed by Keith.

\bibliographystyle{amsplain}

\end{document}